\newcommand{\s}{\smallskip}
\newtheorem{theo}{Theorem}
\newtheorem{coro}{Corollary}
\newtheorem{lemm}{Lemma}
\newtheorem{rema}{Remark}
\begin{document}

\title{On the connectedness of the branch locus of rational maps}
\author{Ruben A. Hidalgo and Sa\'ul Quispe}

\subjclass[2000]{37F10}
\keywords{Rational maps, Automorphisms, Moduli spaces, Branch locus}

\address{Departamento de Matem\'atica y Estad\'{\i}stica, Universidad de La Frontera. Casilla 54-D, 4780000 Temuco, Chile}
\email{ruben.hidalgo@ufrontera.cl}
\email{saul.quispe@ufrontera.cl}
\thanks{Partially supported by Project Fondecyt 1150003}

\begin{abstract}
Milnor proved that the moduli space ${\rm M}_{d}$ of rational maps of degree $d \geq 2$ has a complex orbifold structure of dimension $2(d-1)$. Let us denote by ${\mathcal S}_{d}$ the singular locus of ${\rm M}_{d}$ and by ${\mathcal B}_{d}$ the
branch locus, that is, the equivalence classes of rational maps with non-trivial holomorphic automorphisms. Milnor observed that we may identify ${\rm M}_2$ with ${\mathbb C}^2$ and, within that identification, that ${\mathcal B}_{2}$ is a cubic curve; so ${\mathcal B}_{2}$ is connected and ${\mathcal S}_{2}=\emptyset$. If $d \geq 3$, then ${\mathcal S}_{d}={\mathcal B}_{d}$. We use simple arguments to prove the connectivity of it.
\end{abstract}

\maketitle

%%%%%%%%%%%%%%%%%%%%
%%%%%%%%%%%%%%%%%%%%
\section{Introduction}
The space ${\rm Rat}_{d}$ of complex rational maps of degree $d \geq 2$ can be identified with a Zariski open set of the $(2d+1)$-dimensional complex projective space ${\mathbb P}_{\mathbb C}^{2d+1}$; this is the complement of the algebraic hypersurface defined by the resultant of two polynomials of degree at most $d$. 

The group of M\"obius transformations ${\rm PSL}_{2}({\mathbb C})$ acts on ${\rm Rat}_{d}$ by conjugation:
$\phi, \psi \in {\rm Rat}_{d}$ are said to be equivalent if there is some $T \in {\rm PSL}_{2}({\mathbb C})$ so that $\psi=T \circ \phi \circ T^{-1}$. The ${\rm PSL}_{2}({\mathbb C})$-stabilizer of $\phi \in {\rm Rat}_{d}$, denoted as ${\rm Aut}(\phi)$, is the group of holomorphic automorphisms of $\phi$. As the subgroups of ${\rm PSL}_{2}({\mathbb C})$ keeping invariant a finite set of cardinality at least $3$ must be finite, it follows that ${\rm Aut}(\phi)$ is finite. Levy \cite{Levy} observed that the order of ${\rm Aut}(\phi)$ is bounded above by a constant depending on $d$.

The quotient space ${\rm M}_{d}={\rm Rat}_{d}/{\rm PSL}_{2}({\mathbb C})$ is the moduli space of rational maps of degree $d$. Silverman \cite{Silv2} obtained that ${\rm M}_{d}$ carries the structure of an affine geometric quotient, Milnor \cite{Milnor} proved that it also carries the structure of a complex orbifold of dimension $2(d-1)$
 (Milnor also obtained that ${\rm M}_{2} \cong {\mathbb C}^{2}$) and Levy \cite{Levy} noted that ${\rm M}_{d}$ is a rational variety. Let us denote by ${\mathcal S}_{d} \subset {\rm M}_{d}$ the singular locus of $M_{d}$, that is, the set of points over which ${\rm M}_{d}$ fails to be a topological manifold. The branch locus of ${\rm M}_{d}$ is the set ${\mathcal B}_{d} \subset {\rm M}_{d}$ consisting of those (classes of) rational maps with non-trivial group of holomorphic automorphisms.

As  ${\rm M}_{2} \cong {\mathbb C}^{2}$, clearly ${\mathcal S}_{2}=\emptyset$.
Using this identification, the locus ${\mathcal B}_{2}$ corresponds to the cubic curve \cite{Fujimura}
$$ 2 x^3 + x^2 y - x^2- 4 y^2 - 8 x y + 12 x + 12 y  -36=0,$$
where the cuspid $(-6,12)$ corresponds to the class of a rational map $\phi(z)=1/z^{2}$ with ${\rm Aut}(\phi) \cong D_{3}$ (dihedral group of order $6$) and all other points in the cubic corresponds to those classes of rational maps with the cyclic group $C_{2}$ as full group of holomorphic automorphisms. In this way, ${\mathcal B}_{2}$ is connected.  

 If $d \geq 3$,  then ${\mathcal S}_{d}={\mathcal B}_{d}$ \cite{MSW}.
In this paper we observe the connectivity of ${\mathcal S}_{d}$. 

\s
\noindent
\begin{theo}\label{conexo}
If $d \geq 3$, then the singular locus ${\mathcal S}_{d}={\mathcal B}_{d}$ is connected.
\end{theo}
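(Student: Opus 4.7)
The plan is to reduce to cyclic symmetries of prime order and then connect the resulting strata via a few reference classes with enlarged symmetry.

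By the classification of finite subgroups of $\mathrm{PSL}_2(\mathbb{C})$---cyclic, dihedral, and the polyhedral groups $A_4, S_4, A_5$---every non-trivial $\mathrm{Aut}(\phi)$ contains a cyclic subgroup $C_p$ of prime order, so I would write $\mathcal{B}_d = \bigcup_p \mathcal{B}_d^{(p)}$, where $\mathcal{B}_d^{(p)}$ is the image in $\mathrm{M}_d$ of the rational maps $\phi$ of degree $d$ with $C_p \subseteq \mathrm{Aut}(\phi)$. For each prime $p$, after $\mathrm{PSL}_2(\mathbb{C})$-conjugation the order-$p$ element can be taken to be $T_0(z) = \omega z$, $\omega = e^{2\pi i/p}$. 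Decomposing $\mathbb{C}[z]_{\leq d} = \bigoplus_{m} V_m$ into $\omega^m$-eigenspaces and writing $\phi = P/Q$ in lowest terms, the commutation $T_0 \phi T_0^{-1} = \phi$ forces $P \in V_j$ and $Q \in V_k$ for some $(j,k)$ with $j - k \equiv 1 \pmod p$. Thus the $T_0$-fixed subset of $\mathrm{Rat}_d$ is a finite union of Zariski-open irreducible linear pieces $L_{j,k}$, and the normalizer $N_{\mathrm{PSL}_2(\mathbb{C})}(\langle T_0\rangle) \cong \mathbb{C}^* \rtimes \langle z\mapsto 1/z\rangle$ pairs $L_{j,k}$ with $L_{d-k,\,d-j}$ in $\mathrm{M}_d$, giving an explicit description of $\mathcal{B}_d^{(p)}$ as a finite union of irreducible subvarieties.

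To glue the pieces together I would exploit reference classes of enlarged symmetry. The map $\phi_0(z) = z^d$ has $\mathrm{Aut}(\phi_0) = D_{d-1}$ (generated by $z\mapsto e^{2\pi i/(d-1)} z$ and $z\mapsto 1/z$), placing $[\phi_0] \in \mathcal{B}_d^{(2)} \cap \bigcap_{p\mid d-1}\mathcal{B}_d^{(p)}$; the dual $\psi_0(z) = 1/z^d$ has $\mathrm{Aut}(\psi_0) = D_{d+1}$, giving $[\psi_0] \in \mathcal{B}_d^{(2)} \cap \bigcap_{p\mid d+1}\mathcal{B}_d^{(p)}$. For any prime $p$ relevant in degree $d$ that is not handled by these two classes (a finite list, by Levy's bound on $|\mathrm{Aut}(\phi)|$), I would construct an explicit degree-$d$ map whose automorphism group is of polyhedral type ($A_4$, $S_4$, or $A_5$) containing both a $C_p$ and a $C_2$, built from the classical Klein invariants, to serve as a bridge class in $\mathcal{B}_d^{(2)} \cap \mathcal{B}_d^{(p)}$. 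Since $d \geq 3$ guarantees that $D_{d-1}$ is non-trivial, $\mathcal{B}_d^{(2)}$ will serve as the central piece through which all $\mathcal{B}_d^{(p)}$ are joined.

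The hard part will be verifying that the individual pieces $L_{j,k}$ of each $C_p$-fixed locus actually connect in $\mathrm{M}_d$---either through the normalizer pairing with $L_{d-k,\,d-j}$, or through points of strictly larger automorphism group that lie simultaneously in several pieces. This reduces to a finite combinatorial check: for each admissible triple $(p, j, k)$ one must produce either a normalizer identification or a specialization in $L_{j,k}$ whose full $\mathrm{Aut}$-group places it in a previously connected stratum. Combined with the handful of reference and polyhedral bridge classes, these local gluings exhibit $\mathcal{B}_d$ as a connected union of finitely many irreducible subvarieties.
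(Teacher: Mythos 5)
Your skeleton (reduce to prime-order cyclic strata, then glue them through classes with extra symmetry) is the same as the paper's, and your reference classes $z^{d}$ and $1/z^{d}$ do take care of every prime $p$ dividing $d-1$ or $d+1$, exactly as the paper's Lemma \ref{lemita} does in its cases (a) and (c) via the dihedral condition $b_{k}=a_{r-k}$. The genuine gap is the remaining primes. Since $C_{p}$ is admissible precisely when $p$ divides $d(d-1)(d+1)$, the primes your two reference classes miss are exactly those with $p\mid d$, and for these your proposed ``polyhedral bridge'' cannot exist: ${\mathcal A}_{4}$, ${\mathfrak S}_{4}$, ${\mathcal A}_{5}$ have no elements of prime order $\geq 7$, and even for $p=5$ the group ${\mathcal A}_{5}$ contains $D_{5}$, whose admissibility forces $d\equiv \pm 1 \pmod 5$, incompatible with $5\mid d$; only $p=3$ with $3\mid d$ (and $d$ odd) can be bridged through ${\mathcal A}_{4}$. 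The paper's device for $p\mid d$ is different: in the normal form $\phi(z)=z\psi(z^{p})$ of Theorem \ref{teociclico}, case (b), it imposes $\psi(-u)=\psi(u)$, so that $V(z)=-z$ is an extra automorphism and $\langle \omega_{p}z,-z\rangle\cong C_{2p}$ bridges to ${\mathcal B}_{d}(C_{2})$. Be aware, though, that this forces $\deg\psi=d/p$ to be even; when $d/p$ is odd (for instance $d=p\geq 5$ prime) one checks from Theorems \ref{teociclico} and \ref{teodihedral} that no admissible finite group contains elements of orders $2$ and $p$ simultaneously, so no bridge of any kind is available and this range of degrees requires separate treatment beyond both your sketch and Lemma \ref{lemita} as written. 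In any case, the step you delegate to ``Klein invariants'' is the heart of the theorem and is not supplied by your argument.

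The second deferred point, connectivity of each individual stratum ${\mathcal B}_{d}(C_{p})$, is also where the paper does concrete work rather than a combinatorial check over pieces $L_{j,k}$. Writing any map commuting with $z\mapsto \omega_{p}z$ as $z\psi(z^{p})$ shows that for fixed $d$ and odd $p$ only one congruence case $d\equiv 1,0,-1\pmod p$ occurs, so your eigenspace pieces collapse to a single Zariski-open subset of a linear family; the only duplication, when $p\mid d$, consists of two families interchanged by the normalizer element $1/z$. Hence each stratum is the image of one irreducible parameter space and is connected (Corollary \ref{coro0}); for $p=2$ and $d$ odd the two families (involution with fixed points preserved or swapped by $\phi$) must additionally be joined, e.g.\ through maps with Klein four-group symmetry as in Theorem \ref{teodihedral} and Remark \ref{obs1}. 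Without establishing this, your proposal has no connected pieces to start gluing, so both of the steps you flag as ``the hard part'' are genuinely open in your write-up.
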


\s

The proof of Theorem \ref{conexo} will be a consequence of the description of the loci of classes of rationals maps admitting a given cyclic group of holomorphic automorphisms. For it, in Section \ref{Sec:auto}, we recall known results concerning rational maps with non-trivial holomorphic automorphisms (see for instance \cite{DM, MSW}). Then, we observe that the loci in moduli space consisting of classes of rational maps admitting the cyclic group $C_n$, $n \geq 2$, as group of holomorphic automorphisms is connected and we prove that, for $n \geq 3$ prime, such a locus always intersect the locus corresponding to $C_2$.

\s

Theorem \ref{conexo} states that given any two rational maps $\phi, \psi \in {\rm Rat}_{d}$, both with non-trivial group of holomorphic automorphisms, there is some $\rho \in {\rm Rat}_d$ which is equivalen to $\psi$ and there is a continuous family 
$\Theta:[0,1] \to {\rm Rat}_d$ with $\Theta(0)=\phi$, $\Phi(1)=\rho$ and ${\rm Aut}(\Theta(t))$ non-trivial for every $t$. At this point we need to observe that if ${\rm Aut}(\phi) \cong {\rm Aut}(\psi)$, we may not ensure that ${\rm Aut}(\Theta(t))$ stay in the same isomorphic class; this comes from the existence of rigid rational maps \cite{DM} (in the non-cyclic situation).

\s

\s
\noindent
\begin{rema}
In the $80$'s Sullivan provided a dictionary between dynamic of rational maps and the dynamic of Kleinian groups \cite{Sull}. If we restrict to Klenian groups being co-compact Fuchsian groups of a fixed genus $g \ge2$, then we are dealing with closed Riemann surfaces of genus $g$ whose moduli space ${\mathcal M}_g$ has the structure of an orbifold of complex dimension $3(g-1)$. The branch locus in ${\mathcal M}_g$ is the set of isomorphic classes of Riemann surfaces with non-trivial holomorphic automorphisms. In \cite{BCI} it was proved that in general the branch locus is non-connected; a difference with the connectivity of brach locus for rational maps.
\end{rema}

\s

%%%%%%%%%%%%%%%%%%%%%%%%%%
%%%%%%%%%%%%%%%%%%%%%%%%%%
\section{Rational maps with non-trivial group of holomorphic automorphisms}\label{Sec:auto}
It is well known that a non-trivial finite subgroup of ${\rm PSL}_{2}({\mathbb C})$ is either isomorphic to a cyclic group $C_{n}$ or the dihedral group $D_{n}$ or one of the alternating groups ${\mathcal A}_{4}, {\mathcal A}_{5}$ or the symmetric group ${\mathfrak S}_{4}$ (see, for instance, \cite{Beardon}). So, the group of holomorphic automorphisms of a rational map of degree at least two is isomorphic to one of the previous ones. Moreover, for each such finite subgroup there is a rational map admitting it as group of holomorphic automorphisms \cite{DM}. 

Let $G$ be either $C_{n}$ ($n \geq 2$), $D_{n}$ ($n \geq 2$), ${\mathcal A}_{4}$, ${\mathcal A}_{5}$ or ${\mathfrak S}_{4}$. Let us denote by ${\mathcal B}_{d}(G) \subset {\rm M}_d$ the locus of classes of rational maps $\phi$ with ${\rm Aut}(\phi)$ containing a subgroup isomorphic to $G$. We say that $G$ is admissible for $d$ if ${\mathcal B}_{d}(G) \neq \emptyset$. 

If $G$ is either $C_{n}$ or $D_{n}$ or ${\mathcal A}_{4}$, then there may be some elements in ${\mathcal B}_{d}(G)$ with full group of holomorphic automorphisms non-isomorphic to $G$ (i.e., they admit more holomorphic automorphisms than $G$). If $G$ is either isomorphic to ${\mathfrak S}_4$ or ${\mathcal A}_5$, then every element in ${\mathcal B}_{d}(G)$ has $G$ as its full group of holomorphic automorphisms and it may have isolated points \cite{DM}, so it is not connected in general.

Below we recall a description of those values of $d$ for which $G$ is admissible (the results we present here are described in \cite{MSW}) and we compute the dimension of ${\mathcal B}_{d}(G)$. Our main interest will be in the cyclic case, in which case we present the explicit computations, but we recall the general situation as a matter of completeness.

 \s
 %%%%%%%%%%%%%%%%%%%%%%%%%
 \subsection{Admissibility in the cyclic case}
 In the case $G=C_{n}$, $n \geq 2$, the admissibility will depend on $d$. First, let us observe that if a rational map admits $C_{n}$ as a group of holomorphic automorphisms, then we may conjugate it by a suitable M\"obius transformation so that we may assume  $C_{n}$ to be generated by the rotation $T(z)=\omega_{n} z$, where $\omega_{n}=e^{2 \pi i/n}$. 
 
\s
\noindent
\begin{theo}\label{teociclico}
Let $d,n \geq 2$ be integers. The group $C_{n}$ is admissible for $d$ if and only if $d$ is congruent to either $-1,0,1$ modulo $n$. Moreover, for such values, 
every rational map of degree $d$ admitting $C_{n}$ as a group of holomorphic automorphisms is equivalent to one of the form $\phi(z)=z\psi(z^{n})$, where $$\psi(z)=\frac{\sum_{k=0}^{r} a_{k}z^{k}}{\sum_{k=0}^{r} b_{k}z^{k}} \in {\rm Rat}_{r},$$
satisfies that
\begin{itemize}
\item[(a)] $a_{r}b_{0}  \neq 0$,  if $d=nr+1$;

\item[(b)] $a_{r}\neq 0$ and $b_{0}=0$, if $d=nr$;

\item[(c)] $a_{r}=b_{0}=0$ and $b_{r} \neq 0$, if $d=nr-1$.
\end{itemize}

In the bove case, $C_{n}$ is generated by the rotation $T(z)=\omega_{n} z$.
\end{theo}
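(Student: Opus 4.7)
The plan is to use the eigenvalue structure of the $C_{n}$-action on the numerator and denominator of $\phi$, then apply the normalizer of $C_{n}$ in ${\rm PSL}_{2}({\mathbb C})$ to put $\phi$ in the canonical form.

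After conjugating, one may assume $C_{n}=\langle T\rangle$ with $T(z)=\omega_{n}z$, since any two cyclic subgroups of ${\rm PSL}_{2}({\mathbb C})$ of the same finite order are conjugate. Writing $\phi=P/Q$ in lowest terms, the condition $\phi\circ T=T\circ\phi$ reads $P(\omega_{n}z)\,Q(z)=\omega_{n}P(z)\,Q(\omega_{n}z)$. Coprimality of $P,Q$ combined with $\deg P(\omega_{n}z)=\deg P(z)$ implies $P(\omega_{n}z)=\omega_{n}^{k+1}P(z)$ and $Q(\omega_{n}z)=\omega_{n}^{k}Q(z)$ for a unique $k\in\{0,\dots,n-1\}$. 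Thus every nonzero monomial of $P$ (resp.\ of $Q$) has degree $\equiv k+1$ (resp.\ $\equiv k$) modulo $n$. The smallest possible such degree in $P$ is $(k+1)\bmod n$ and in $Q$ is $k$; both are strictly positive exactly when $1\le k\le n-2$, and then $z\mid\gcd(P,Q)$, a contradiction. Hence $k\in\{0,n-1\}$.

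If $k=0$, then $P(z)=zA(z^{n})$ and $Q(z)=B(z^{n})$, giving $\phi(z)=z\psi(z^{n})$ with $\psi=A/B$. If $k=n-1$, then $P(z)=C(z^{n})$ and $Q(z)=z^{n-1}D(z^{n})$, and conjugating by the inversion $S(z)=1/z$---which normalizes $C_{n}$ since $STS^{-1}=T^{-1}$---a direct computation with the reciprocal polynomials $\widetilde{C},\widetilde{D}$ returns a map of the form $z\widetilde\psi(z^{n})$. Writing $\psi(z)=\sum_{k=0}^{r}a_{k}z^{k}\big/\sum_{k=0}^{r}b_{k}z^{k}\in{\rm Rat}_{r}$, the degree of $\phi$ is read off from $a_{r},b_{r},b_{0}$: one has $\deg P=nr+1$ iff $a_{r}\neq 0$, $\deg Q=nr$ iff $b_{r}\neq 0$, and a single factor of $z$ cancels between $P$ and $Q$ iff $b_{0}=0$. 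This yields the three cases (a), (b), (c) and the corresponding residues $d\equiv 1,0,-1\pmod{n}$; conversely each family is manifestly $T$-equivariant, giving the reverse direction.

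The main obstacle I anticipate is the bookkeeping: (i) confirming that the inversion $S$ really does take the $k=n-1$ form to the $k=0$ form with the correct degree, and (ii) noting that the residue $d\equiv 0\pmod{n}$ admits a second polynomial presentation $\{a_{r}=0,\,b_{0}b_{r}\neq 0\}$ inside the $k=0$ case, which is interchanged with case (b) by a further $S$-conjugation, so that the three listed forms (a)--(c) are genuinely exhaustive up to equivalence.
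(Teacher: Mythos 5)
Your proposal is correct and takes essentially the same approach as the paper: a semi-invariance (character) analysis of the commutation relation on the numerator and denominator forcing $\phi(z)=z\psi(z^{n})$, followed by reading the degree off the coefficients $a_{r},b_{r},b_{0}$ and using the inversion $z\mapsto 1/z$, which normalizes $\langle T\rangle$, to fold the redundant presentation of the case $d\equiv 0\pmod n$ into case (b). The only cosmetic difference is that you track the multiplier $\omega_{n}^{k}$ on $P$ and $Q$ of $\phi$ itself and rule out $1\le k\le n-2$ by coprimality, whereas the paper first divides by $z$ and analyzes the invariant function $\rho(z)=\phi(z)/z$, which shortens that bookkeeping.
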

\begin{proof}
Let $\phi$ be a rational map admitting a holomorphic automorphism of order $n$. By conjugating it by a suitable M\"obius transformation, we may assume that such automorphism is the rotation $T(z)=\omega_{n} z$.

(1) Let us write $\phi(z)=z \rho(z)$.
The equality $T \circ \phi \circ T^{-1}=\phi$ is equivalent to $\rho(\omega_{n} z)=\rho(z)$. Let 
$$\rho(z)=\frac{U(z)}{V(z)}=\frac{\sum_{k=0}^{l} \alpha_{k}z^{k}}{\sum_{k=0}^{l} \beta_{k}z^{k}},$$
where either $\alpha_{l} \neq 0$ or $\beta_{l} \neq 0$ and $(U,V)=1$.

The equality $\rho(\omega_{n} z)=\rho(z)$ is equivalent to the existence of some $\lambda \neq 0$ so that
$$\omega_{n}^{k}\alpha_{k}=\lambda \alpha_{k}, \quad \omega_{n}^{k}\beta_{k}=\lambda \beta_{k}.$$

By taking $k=l$, we obtain that $\lambda=\omega_{n}^{l}$. So the above is equivalent to have, for $k<l$, 
$$\omega_{n}^{l-k} \alpha_{k}=\alpha_{k}, \quad \omega_{n}^{l-k} \beta_{k}=\beta_{k}.$$

So, if $\alpha_{k} \neq 0$ or $\beta_{k} \neq 0$, then $l-k \equiv 0 \mod(n)$. As $(U,V)=1$, either $\alpha_{0} \neq 0$ or $\beta_{0} \neq 0$; so $l \equiv 0 \mod(n)$. In this way, if $\alpha_{k} \neq 0$ or $\beta_{k} \neq 0$, then $k \equiv 0 \mod(n)$. In this way, $\rho(z)=\psi(z^{n})$ for a suitable rational map $\psi(z)$.

\s
\noindent
(2) It follows from (1) that $\phi(z)=z\psi(z^{n})$, for $\psi \in {\rm Rat}_{r}$ and suitable $r$. We next provide relations between $d$ and $r$. Let us write
$$\psi(z)=\frac{P(z)}{Q(z)}=\frac{\sum_{k=0}^{r} a_{k}z^{k}}{\sum_{k=0}^{r} b_{k}z^{k}},$$
where $(P,Q)=1$ and either $a_{r} \neq 0$ or $b_{r} \neq 0$. In this way,
$$\phi(z)=\frac{zP(z^{n})}{Q(z^{n})}=\frac{z \sum_{k=0}^{r} a_{k}z^{kn}}{\sum_{k=0}^{r} b_{k}z^{kn}}.$$

Let us first assume that $Q(0) \neq 0$, equivalently, $\psi(0) \neq \infty$. Then $\phi(0)=0$ and the polinomials $zP(z^{n})$ and $Q(z^{n})$ are relatively prime. If ${\rm deg}(P) \geq {\rm deg}(Q)$, then $r={\rm deg}(P)$, $\psi(\infty) \neq 0$, $\phi(\infty)=\infty$ and ${\rm deg}(\phi)=1+nr$. If ${\rm deg}(P) < {\rm deg}(Q)$, then $r={\rm deg}(Q)$, $\psi(\infty)= 0$, $\phi(\infty)=0$ and ${\rm deg}(\phi)=nr$.

Let us now assume that $Q(0)=0$, equivalently, $\psi(0)= \infty$. Let us write $Q(u)=u^{l}\widehat{Q}(u)$, where $l \geq 1$ and $\widehat{Q}(0)\neq 0$; so ${\rm deg}(Q)=l+{\rm deg}(\widehat{Q})$. In this case,
$$\phi(z)=\frac{P(z^{n})}{z^{ln-1}\widehat{Q}(z^{n})}$$
and the polinomials $P(z^{n})$ (of degree $n{\rm deg}(P)$) and $z^{ln-1}\widehat{Q}(z^{n})$ (of degree $n{\rm deg}(Q)-1$) are relatively prime. If ${\rm deg}(P) \geq {\rm deg}(Q)$, then $r={\rm deg}(P)$, $\psi(\infty) \neq 0$, $\phi(\infty)=\infty$ and ${\rm deg}(\phi)=nr$. If ${\rm deg}(P) < {\rm deg}(Q)$, then $r={\rm deg}(Q)$, $\psi(\infty)= 0$, $\phi(\infty)=0$ and ${\rm deg}(\phi)=nr-1$.

Summarizing all the above, we have the following situations:
\begin{itemize}
\item[(i)] If $\phi(0)=0$ and $\phi(\infty)=\infty$, then  $\psi(0)\neq \infty$ and $\psi(\infty)\neq 0$; in particular, $d=nr+1$. This case corresponds to have $a_{r}b_{0}  \neq 0$.

\item[(ii)] If $\phi(0)=\infty=\phi(\infty)$, then $\psi(0)=\infty$ and $\psi(\infty) \neq 0$; in which case $d=nr$. This case corresponds to have $a_{r}\neq 0$ and $b_{0}=0$.

\item[(iii)] If $\phi(0)=0=\phi(\infty)$, then $\psi(0)\neq \infty$ and $\psi(\infty)= 0$; in particular, $d=nr$. This case corresponds to have $a_{r}=0$ and $b_{0} \neq 0$. But in this case, we may conjugate $\phi$ by $A(z)=1/z$ (which normalizes $\langle T \rangle$) in order to be in case (ii) above.

\item[(iv)] If $\phi(0)=\infty$ and $\phi(\infty)=0$, then $\psi(0)=\infty$ and $\psi(\infty)=0$; in particular, $d=nr-1$. This case corresponds to have $a_{r}=b_{0}=0$ (in which case $b_{r} \neq 0$ as $\psi$ has degree $r$).
\end{itemize}

\end{proof}

\s
\noindent
\begin{coro}
$C_{2}$ is admissible for every $d \geq 2$.
\end{coro}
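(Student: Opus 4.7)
The plan is to apply Theorem \ref{teociclico} in the special case $n=2$ and notice that the admissibility condition becomes vacuous. According to the theorem, $C_n$ is admissible for $d$ if and only if $d \equiv -1, 0, 1 \pmod{n}$; for $n=2$ this reads $d \equiv 0$ or $d \equiv 1 \pmod{2}$, which is satisfied by every integer $d \geq 2$. So the corollary is an immediate numerical consequence, and there is no real obstacle.

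If one wishes to make the proof self-contained by exhibiting explicit maps rather than quoting the admissibility half of the theorem, a one-line construction suffices in each parity. For $d=2r+1$ odd, take $\psi(z) = z^r + 1$ (so $a_r b_0 = 1 \neq 0$) and form $\phi(z) = z\psi(z^2) = z^{2r+1} + z$; this lies in case (a) of Theorem \ref{teociclico} and manifestly satisfies $\phi(-z) = -\phi(z)$, i.e.\ $T \circ \phi \circ T^{-1} = \phi$ for $T(z) = -z$. For $d=2r$ even, take $\psi(z) = (z^r + 1)/z$ (so $a_r = 1$, $b_0 = 0$, numerator and denominator coprime) and form $\phi(z) = z\psi(z^2) = (z^{2r} + 1)/z$; this is case (b) and again satisfies $\phi(-z) = -\phi(z)$. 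In both cases $\phi$ has the prescribed degree $d$ and admits $C_2 = \langle T \rangle$ as a subgroup of its holomorphic automorphism group, so $\mathcal{B}_d(C_2) \neq \emptyset$.

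Since the argument is a direct specialization of the preceding theorem, no new ideas are needed; the only thing worth noting is that $n=2$ is distinguished because the three residue classes $\{-1, 0, 1\} \pmod{n}$ exhaust all of $\mathbb{Z}/n\mathbb{Z}$ precisely when $n \in \{2, 3\}$, with $n=2$ being the case where admissibility holds unconditionally.
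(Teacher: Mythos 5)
Your proposal is correct and matches the paper's (implicit) argument: the corollary follows immediately from Theorem \ref{teociclico}, since every $d$ is congruent to $0$ or $1$ modulo $2$. The explicit examples $\phi(z)=z^{2r+1}+z$ and $\phi(z)=(z^{2r}+1)/z$ are a harmless bonus and check out, but they are not needed beyond the specialization $n=2$.
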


\s

The explicit description provided in Theorem \ref{teociclico} permits to obtain the connectivity of ${\mathcal B}_{d}(C_{n})$ and its dimension.

\s
\noindent
\begin{coro}\label{coro0}
If $n \geq 2$ and $C_{n}$ is admissible for $d$, then ${\mathcal B}_{d}(C_{n})$ is connected and  
$${\rm dim}_{\mathbb C}({\mathcal B}_{d}(C_{n}))=\left\{ \begin{array}{ll}
2(d-1)/n, & d \equiv 1 \mod n\\
(2d-n)/n, & d \equiv 0 \mod n\\
2(d+1-n)/n, & d \equiv -1 \mod n
\end{array}
\right.
$$
\end{coro}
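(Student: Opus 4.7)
The plan is to apply Theorem \ref{teociclico}, which exhibits every class in ${\mathcal B}_d(C_n)$ by a normal-form representative $\phi(z)=z\psi(z^n)$ with $\psi\in\mathrm{Rat}_r$ satisfying one of conditions (a), (b), (c). For $n\ge 3$ the value of $d\bmod n$ forces a unique case and hence a unique $r$; for $n=2$ with $d$ odd, both (a) and (c) occur, with $r=(d-1)/2$ and $r=(d+1)/2$ respectively. Let ${\mathcal R}_r$ denote the parameter space of admissible $\psi$'s in the chosen case: it is a non-empty Zariski-open subset of a complex projective space (cut out by the non-vanishing of the prescribed coefficients and of the resultant of numerator and denominator), and hence path-connected in the analytic topology.

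The continuous map ${\mathcal R}_r\to{\mathcal B}_d(C_n)$, $\psi\mapsto [z\psi(z^n)]$, has connected image, which covers the sub-locus of ${\mathcal B}_d(C_n)$ realized through the given case. When only one case applies this immediately gives the connectedness of ${\mathcal B}_d(C_n)$. For the dimension, two points of ${\mathcal R}_r$ map to the same class exactly when they differ by the normalizer $N=\{z\mapsto\alpha z\}\cup\{z\mapsto\alpha/z\}$ of $\langle T\rangle$ in $\mathrm{PSL}_2({\mathbb C})$, of complex dimension $1$. Since a generic $\phi$ in the family has $\mathrm{Aut}(\phi)=C_n$, its $N$-stabilizer is the finite group $\langle T\rangle$, so generic $N$-orbits have complex dimension $1$ and $\dim_{\mathbb C}{\mathcal B}_d(C_n)=\dim_{\mathbb C}{\mathcal R}_r-1$. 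A direct coefficient count then yields the three formulas: in case (a) the projective parameter space has dimension $2r+1$; in case (b) the condition $b_0=0$ reduces this to $2r$; in case (c) the further condition $a_r=0$ reduces it to $2r-1$. Subtracting one in each case and substituting the appropriate value of $r$ gives the asserted formulas.

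The delicate remaining situation is $n=2$ with $d$ odd, when cases (a) and (c) produce two Zariski-open pieces of ${\mathcal B}_d(C_2)$ that are disjoint on the open stratum where $\mathrm{Aut}(\phi)=C_2$, since whether the involution's two fixed points are fixed by $\phi$ or swapped by $\phi$ is a $\mathrm{PSL}_2$-invariant cycle structure. I expect the main obstacle to be bridging these two pieces, for which the natural approach is to observe that a class $[\phi]$ with $\mathrm{Aut}(\phi)$ strictly containing $C_2$ (for example containing a Klein four subgroup, so that three involutions are available) can be simultaneously realized in case (a) and in case (c) via different choices of involution; consequently the two pieces meet in their closure along the enlarged-automorphism locus within ${\mathcal B}_d(C_2)$, recovering overall connectedness. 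The coefficient counts fortunately agree in this situation, both giving the uniform value $d-1$.
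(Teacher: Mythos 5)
Your proof follows essentially the paper's own route: normal forms $\phi(z)=z\psi(z^{n})$ from Theorem \ref{teociclico}, connectedness of the Zariski-open parameter space of admissible $\psi$, and subtraction of one complex dimension for the action of the one-dimensional normalizer of $\langle T\rangle$, with the same coefficient counts $2r+1$, $2r$, $2r-1$ in cases (a), (b), (c). Where you differ is that you notice, correctly, that for $n=2$ and $d$ odd cases (a) and (c) both occur (with $r=(d-1)/2$ and $r=(d+1)/2$), producing two a priori separate connected pieces of ${\mathcal B}_{d}(C_{2})$, distinguished by whether $\phi$ fixes or interchanges the two fixed points of its involution; the paper's proof passes over this point by treating the three congruence cases as mutually exclusive and asserting that a single ${\rm Rat}_{d}(n,r)$ projects onto ${\mathcal B}_{d}(C_{n})$, which fails exactly here (a generic member of the case (c) family for $n=2$ has ${\rm Aut}(\phi)=C_{2}$ and swaps the fixed points, so it is not in the case (a) image). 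So your extra step is a genuine refinement. Its justification is, however, overstated as written: it is not true that every class whose automorphism group contains a Klein four group is realized in both cases --- for instance, with $\psi(u)=P(u)/P^{*}(u)$, $P^{*}$ the reversed polynomial and $\deg P$ even, the map $\phi(z)=z\psi(z^{2})$ fixes pointwise the fixed-point pairs of all three involutions of $\langle -z, 1/z\rangle$, so it lies only in the case (a) piece. But you only need one bridging class for each odd $d$, and the paper's dihedral analysis supplies it: taking the minus sign, $\psi(u)=-P(u)/P^{*}(u)$ with $\deg P=r=(d-1)/2$, the map $\phi(z)=z\psi(z^{2})$ fixes $0,\infty$ (the fixed points of $T(z)=-z$) and interchanges $\pm 1$ (the fixed points of $A(z)=1/z$), cf.\ Remark \ref{obs1}(a); conjugating so that $A$ becomes $z\mapsto -z$ exhibits the same class in case (c). Hence the two pieces intersect outright (not merely in their closures), and your argument, with this witness inserted, is complete; as you observe, both pieces have dimension $d-1$, consistent with the stated formula.
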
 
\begin{proof}
(1) By Theorem \ref{teociclico}, the rational maps in ${\rm Rat}_{d}$ admitting a holomorphic automorphism of order $n \geq 2$ are conjugated those of the form $\phi(z)=z\psi(z^{n}) \in {\rm Rat}_{d}$ for $\psi \in {\rm Rat}_{r}$ as described in the same theorem. 

Let us denote by ${\rm Rat}_{d}(n,r)$ the subset of ${\rm Rat}_{d}$ formed by all those rational maps of the $\phi(z)=z\psi(z^{n})$, where $\psi$ satisfies the conditions in Theorem \ref{teociclico}.

If $d=nr+1$, then we may identify ${\rm Rat}_{d}(n,r)$ with an open Zariski subset of ${\rm Rat}_{r}$; 
if $d=nr$, then it is identified with an open Zariski subset of a linear hypersurface of ${\rm Rat}_{r}$; and if $d=nr-1$, then it is identified with an open Zariski subspace of  a linear subspace of codimension two of ${\rm Rat}_{r}$. In each case, we have that ${\rm Rat}_{d}(n,r)$ is connected. As the projection of ${\rm Rat}_{d}(n,r)$ to ${\rm M}_{d}$ is exactly ${\mathcal B}_{d}(C_{n})$, we obtain its connectivity.

(2) The dimension counting. We may see that, if $d=nr+1$, then $\psi$ depends on $2r+1$ complex parameters; if $d=nr$, then $\psi$ depends on $2r$ complex parameters; and if $d=nr-1$, then $\psi$ depends on $2r-1$ complex parameters.
The normalizer in ${\rm PSL}_{2}({\mathbb C})$ of $\langle T \rangle$ is the $1$-complex dimensional group $N_{n}=\langle A_{\lambda}(z)=\lambda z, B(z)=1/z: \lambda \in {\mathbb C}-\{0\}\rangle$. If $U \in N_{n}$, then $U \circ \phi \circ U^{-1}$ will also have $T$ as a holomorphic automorphism. In fact, 
$$A_{\lambda} \circ \phi \circ A_{\lambda}^{-1} (z)=z\psi(z^{n}/\lambda^{n}),$$
$$B \circ \phi \circ B (z)=z/\psi(1/z^{n}).$$

In this way, there is an action of $N_{n}$ over ${\rm Rat}_{r}$ so that the orbit of $\psi(u)$ is given by the rational maps
$\psi(u/t)$, where $t \in {\mathbb C}-\{0\}$, and $1/\psi(1/u)$. In this way, we obtain the desired dimensions.
\end{proof}

\s
 %%%%%%%%%%%%%%%%%%%%%%%%%
 \subsection{Admissibility in the dihedral case}
 Let us now assume $\phi \in {\rm Rat}_{d}$ admits the dihedral group $D_{n}$, $n \geq 2$, as a group of holomorphic automorphisms. Up to conjugation, we may assume that $D_{n}$ is generated by $T(z)=\omega_{n}z$ and $A(z)=1/z$. By Theorem \ref{teociclico}, 
we may assume that $\phi(z)=z\psi(z^{n})$, where  
$$\psi(z)=\frac{\sum_{k=0}^{r} a_{k}z^{k}}{\sum_{k=0}^{r} b_{k}z^{k}} \in {\rm Rat}_{r},$$ 
where either 
\begin{itemize}
\item[(a)] $a_{r}b_{0}  \neq 0$,  if $d=nr+1$;

\item[(b)] $a_{r}\neq 0$ and $b_{0}=0$, if $d=nr$;

\item[(c)] $a_{r}=b_{0}=0$ and $b_{r} \neq 0$, if $d=nr-1$;
\end{itemize} 
with the extra condition that $\psi(z)=1/\psi(1/z)$. 
 This last condition is equivalent to the existence of some $\lambda \neq 0$ so that
 $$\lambda a_{k}=b_{r-k}, \quad \lambda b_{k}=a_{r-k}, \quad k=0,1,...,r.$$

The above is equivalent to have $\lambda \in \{\pm 1\}$ and $b_{k}=\lambda a_{r-k}$, for $k=0,1,...,r$. In particular, this asserts that $a_{r}=0$ if and only if $b_{0}=0$ (so case (b) above does not hold). Also, as the normalizer of the dihedral group $D_n=\langle T(z)=\omega_n z, A(z)=1/z\rangle$ is a finite group, the dimension of ${\mathcal B}_{d}(D_{n})$ is the same as half the projective dimension of those rational maps $\psi$ satisfying (a) or (c). So, we may conclude the following result.

\s
\noindent
\begin{theo}\label{teodihedral}
Let $d,n \geq 2$ be integers. The dihedral group $D_{n}$ is admissible for $d$ if and only if $d$ is congruent to either $\pm 1$ modulo $n$. Moreover, for such values, 
every rational map of degree $d$ admitting $D_{n}$ as a group of holomorphic automorphisms is equivalent to one of the form $\phi(z)=z\psi(z^{n})$, where $$\psi(z)=\pm \frac{\sum_{k=0}^{r} a_{k}z^{k}}{\sum_{k=0}^{r} a_{r-k}z^{k}} \in {\rm Rat}_{r},$$
satisfies that
\begin{itemize}
\item[(i)] $a_{r} \neq 0$,  if $d=nr+1$;

\item[(ii)] $a_{r}=0$ and $a_{0} \neq 0$, if $d=nr-1$.
\end{itemize}

In the above case, $D_{n}$ is generated by the rotation $T(z)=\omega_{n} z$ and the involution $A(z)=1/z$.

If $n \geq 2$ and $D_{n}$ is admissible for $d$, then  
$${\rm dim}_{\mathbb C}({\mathcal B}_{d}(D_{n}))=\left\{ \begin{array}{ll}
(d-1)/n, & d \equiv 1 \mod n\\
(d+1-n)/n, & d \equiv -1 \mod n
\end{array}
\right.
$$
\end{theo}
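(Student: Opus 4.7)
The plan is to bootstrap on Theorem \ref{teociclico}. Since $D_n$ contains the cyclic subgroup generated by $T(z)=\omega_n z$, every $\phi \in \mathrm{Rat}_d$ with $D_n \subseteq \mathrm{Aut}(\phi)$ is equivalent to one of the form $\phi(z)=z\psi(z^n)$ with $\psi \in \mathrm{Rat}_r$ satisfying one of the three numerical conditions (a), (b), (c) of that theorem. The only genuinely new input to handle is the involution $A(z)=1/z$: I would express the constraint $A\circ\phi\circ A=\phi$ in terms of the coefficients of $\psi$ and read off which of the three cases survive, together with the dimension.

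A direct computation gives $A\circ\phi\circ A(z)=z/\psi(1/z^n)$, so the $A$-invariance is equivalent to the functional identity $\psi(u)\psi(1/u)=1$. Writing $\psi=P/Q$ with $P(z)=\sum a_k z^k$ and $Q(z)=\sum b_k z^k$, this translates, after clearing denominators, into the existence of a nonzero scalar $\lambda$ with $\lambda a_k=b_{r-k}$ and $\lambda b_k=a_{r-k}$ for all $k$. Combining the two relations gives $\lambda^2=1$, so $\lambda\in\{\pm 1\}$ and $b_k=\lambda a_{r-k}$; absorbing the sign into a global $\pm$ in front of $\psi$ produces exactly the palindromic normal form stated in the theorem.

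This palindromic relation is also the crux of the admissibility assertion: since $b_0=\lambda a_r$, one has $a_r=0$ if and only if $b_0=0$, so case (b) of Theorem \ref{teociclico} (which requires $a_r\neq 0$ but $b_0=0$) is forbidden. The remaining cases (a) and (c) correspond precisely to $d\equiv 1$ and $d\equiv -1 \pmod n$, yielding conditions (i) and (ii).

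For the dimension count the key observation is that the normalizer of $D_n$ in $\mathrm{PSL}_2(\mathbb{C})$ is finite, so the complex dimension of $\mathcal{B}_d(D_n)$ equals the projective dimension of the space of admissible $\psi$. In case (a) the palindromy leaves $r+1$ free parameters $a_0,\ldots,a_r$, giving projective dimension $r=(d-1)/n$; in case (c) the extra constraints $a_r=0=b_0$ cut this down to $r$ parameters $a_0,\ldots,a_{r-1}$ (with $a_0\neq 0$), producing $r-1=(d+1-n)/n$. The one delicate point, and essentially the only obstacle, is verifying that the $D_n$-normalizer really acts with finite orbits on the parameter space, so that no positive-dimensional continuous identification arises as it did in the cyclic case; this is exactly where the dihedral computation diverges from the cyclic one.
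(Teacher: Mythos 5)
Your argument is correct and mirrors the paper's proof: both reduce to Theorem \ref{teociclico}, translate invariance under $A(z)=1/z$ (i.e.\ $\psi(z)=1/\psi(1/z)$) into the coefficient relations $\lambda a_{k}=b_{r-k}$, $\lambda b_{k}=a_{r-k}$ with $\lambda=\pm 1$, note that $a_{r}=0 \Leftrightarrow b_{0}=0$ to exclude the case $d\equiv 0 \bmod n$, and obtain the dimensions from the projective parameter count of palindromic $\psi$ together with the finiteness of the normalizer of $D_{n}$. The point you flag as delicate --- that the normalizer of $\langle T, A\rangle$ in ${\rm PSL}_{2}({\mathbb C})$ is finite, so no positive-dimensional identifications occur --- is exactly what the paper also invokes without proof (it is the standard fact that this normalizer is $D_{2n}$ for $n\geq 3$ and ${\mathfrak S}_{4}$ for $n=2$), so there is no genuine gap.
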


\s
\noindent
\begin{rema}\label{obs1}
(a) If we are in case (i) and ``+" sign for $\psi$, then $\phi$ fixes both fixed points of $T$ and both fixed points of $A$. But, if we are in case (i) and ``-" sign for $\psi$, then $\phi$ fixes both fixed points of $T$ and permutes  both fixed points of $A$.

(b) If we are in case (ii) and ``+" sign for $\psi$, then $\phi$ permutes both fixed points of $T$ and fixes both fixed points of $A$. But, if we are in case (ii) and ``-" sign for $\psi$, then $\phi$ permutes both fixed points of $T$ and also both fixed points of $A$.

(c) If $n \geq 3$, then cases (i) and (ii) cannot happen simultaneously. Also, in either case, we obtain that ${\mathcal B}_{d}(D_{n})$ has two connected components (they correspond to the choices of the sign ``+" or ``-").
\end{rema}

\s
%%%%%%%%%%%%%%%%%%%%%%%
\subsection{Admissibility of the platonic cases}
Let us now assume that $\phi \in {\rm Rat}_{d}$ admits as group of holomorphic automorphisms either ${\mathcal A}_{4}$, ${\mathcal A}_{5}$ or ${\mathfrak S}_{4}$. 
We may assume, up to conjugation, that (see, for instance, \cite{Beardon})

\begin{enumerate}
\item $\langle T_{3}, B: T_{3}^{3}=B^{2}=(T_{3} \circ A)^{3}=I\rangle  \cong {\mathcal A}_{4}$;
\item $\langle T_{4}, C: T_{4}^{4}=C^{2}=(T_{4} \circ C)^{3}=I\rangle  \cong {\mathfrak S}_{4}$.
\item $\langle T_{5}, D: T_{5}^{5}=D^{2}=(T_{5} \circ D)^{3}=I\rangle  \cong {\mathcal A}_{5}$;
\end{enumerate}
\s
where
$$T_{n}(z)=\omega_{n} z, \quad \omega_{n}=e^{2 \pi i/n},$$
$$A(z)=1/z,$$
$$B(z)=\frac{(\sqrt{3}-1)\left(z+(\sqrt{3}-1)\right)}{2z-(\sqrt{3}-1)},$$
$$C(z)=\frac{(\sqrt{2}+1)\left(-z+(\sqrt{2}+1)\right)}{z+(\sqrt{2}+1)},$$
$$D(z)=\frac{\left(1+\sqrt{2-\omega_{5}-\omega_{5}^{4}}\;\right)\left(-z+\left(1+\sqrt{2-\omega_{5}-\omega_{5}^{4}}\;\right)\right)}{(1-\omega_{5}-\omega_{5}^{4})z+\left(1+\sqrt{2-\omega_{5}-\omega_{5}^{4}}\;\right)}.$$

Working in a similar fashion as done for the dihedral situation, one may obtains the following.
 
 \s
 \noindent
 \begin{theo}[\cite{MSW}] 
 Let $d \geq 2$.
 \begin{enumerate}
 \item ${\mathcal A}_{4}$ is admissible for $d$ if and only if $d$ is odd.
 \item ${\mathcal A}_{5}$ is admissible for $d$ if and only if $d$ is congruent modulo $30$ to either $1$, $11$, $19$, $21$.
 \item ${\mathfrak S}_{4}$ is admissible for $d$ if and only if $d$ is coprime to $6$.
 \end{enumerate}
 \end{theo}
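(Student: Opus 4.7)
The plan is to extend the dihedral-case strategy from the excerpt to each of the three platonic groups, using the explicit generators already fixed in the statement. Since $\mathcal{A}_4 \supset \langle T_3\rangle$, $\mathfrak{S}_4 \supset \langle T_4\rangle$ and $\mathcal{A}_5 \supset \langle T_5\rangle$, a first application of Theorem \ref{teociclico} puts any rational map $\phi \in {\rm Rat}_d$ admitting the group into the normal form $\phi(z) = z\psi(z^n)$ with $\psi \in {\rm Rat}_r$ falling in one of cases (a)--(c), determined by $d \bmod n$ (here $n = 3,4,5$ respectively).

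The second step is to impose the extra functional equation $\sigma \circ \phi \circ \sigma^{-1} = \phi$, where $\sigma$ is the remaining involutive generator $B$, $C$, or $D$. Substituting $\phi(z) = z\psi(z^n)$ and clearing denominators turns this into a polynomial identity in $z$ that is linear in the coefficients $a_k, b_k$ of $\psi$. For each of the three groups and each of the cases (a)--(c), I would determine precisely when this linear system admits a solution compatible with the non-vanishing conditions of Theorem \ref{teociclico} (so that $\deg\phi = d$ is preserved and $\sigma$ acts nontrivially). The constraint on $d$ for which such a solution exists is exactly the admissibility condition, and a case-by-case tally produces $d$ odd for $\mathcal{A}_4$, $\gcd(d,6)=1$ for $\mathfrak{S}_4$, and $d \equiv 1, 11, 19, 21 \pmod{30}$ for $\mathcal{A}_5$.

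The most delicate computation is expected to be $G = \mathcal{A}_5$: the coefficients of $D$ involve $\sqrt{2-\omega_5-\omega_5^4}$, so the linear system lives over a quadratic extension of $\mathbb{Q}(\omega_5)$, and one has to carefully track when the conditions accidentally force $a_r = 0$ or $b_r = 0$, which would drop the degree below $d$. One also has to verify that the solutions produced in the three cases (a)--(c) are genuinely inequivalent, and not related by the normalizer of the chosen cyclic subgroup.

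A conceptually cleaner alternative, which I would use to double-check the bookkeeping, is to pass to the orbifold quotient $\widehat{\mathbb{C}}/G$, which is the Riemann sphere with signatures $(2,3,3)$, $(2,3,4)$, $(2,3,5)$ respectively. A $G$-equivariant degree $d$ self-map of $\widehat{\mathbb{C}}$ is the same data as a degree $d$ orbifold self-cover of this quotient. Using that $\phi$ must send each exceptional $G$-orbit (of sizes $\{4,4,6\}$ for $\mathcal{A}_4$, $\{6,8,12\}$ for $\mathfrak{S}_4$, $\{12,20,30\}$ for $\mathcal{A}_5$) into a union of exceptional orbits, and that local ramification indices be compatible with the stabilizer orders, orbifold Riemann--Hurwitz cuts out exactly the stated congruences on $d$, matching the derivation in \cite{MSW}.
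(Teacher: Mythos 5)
You should note first that the paper does not actually prove this theorem: it is quoted from \cite{MSW}, with only the remark that one argues ``in a similar fashion'' to the dihedral case, so your first approach is exactly the route the paper gestures at. But your proposal never carries that route out: for each group and each congruence class of $d$ you only promise to ``determine precisely when this linear system admits a solution'' and then assert that the tally yields the stated congruences. Those case-by-case computations are the entire content of the statement --- both the non-existence of solutions for the excluded degrees and the existence, with the coprimality and leading-coefficient conditions intact, for the admitted ones --- so as written this is a plan rather than a proof. The alternative argument has the same defect plus an inaccuracy: the map induced on $\widehat{\mathbb C}/G$ is not an orbifold self-cover (it is ramified far beyond the cone points); what one really uses is the local relation $g'(\phi(x))=g'(x)^{e_x}$ at fixed points of the stabilizers, and in any case such counting only gives the \emph{necessity} of the congruences, while sufficiency still requires exhibiting equivariant maps in every admitted degree (e.g.\ via Klein's invariant forms, as in \cite{DM, MSW}), which you do not address.

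Concretely, the tally you claim for $\mathcal{A}_5$ cannot be the outcome of a correct computation. An order-$3$ rotation $g\in\mathcal{A}_5$ has both of its fixed points inside the exceptional orbit of size $20$, so any equivariant $\phi$ maps that orbit into itself, and at such a point the relation $g'(\phi(x))=g'(x)^{e}$ with both multipliers primitive cube roots of unity forces the local degree $e\equiv\pm1\pmod 3$, whence $3\nmid d$; equivalently, $D_3\subset\mathcal{A}_5$ together with the paper's own Theorem \ref{teodihedral} already forces $d\equiv\pm1\pmod 3$. Hence $d\equiv 21\pmod{30}$ is impossible, whereas $d\equiv 29\pmod{30}$ does occur (the degree-$29$ equivariant attached to the degree-$30$ invariant form of the icosahedral group, which has simple roots, so the two partial derivatives are coprime). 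The correct residue set modulo $30$ is therefore $\{1,11,19,29\}$, i.e.\ $d\equiv\pm1,\pm11\pmod{30}$; the ``$21$'' in the statement is evidently a misprint for ``$29$'', and the fact that your sketch reports the computation as confirming the printed list verbatim is the clearest indication that the decisive computations were not actually performed.
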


%%%%%%%%%%%%
\section{Proof of Theorem \ref{conexo}}

It is clear that ${\mathcal B}_{d}$ is equal to the union of all ${\mathcal B}_{d}(G)$, where $G$ runs over the admissible finite groups for $d$.

If $G$ is admissible for $d$ and $p$ is a prime integer dividing the order of $G$ (so that the cyclic group $C_{p}$ is a subgroup of $G$), then $C_{p}$ is admissible for $d$ and ${\mathcal B}_{d}(G) \subset {\mathcal B}_{d}(C_{p})$. In this way, ${\mathcal B}_{d}$ is equal to the union of all ${\mathcal B}_{d}(C_{p})$, where $p$ runs over all integer primes with $C_{p}$ admissible for $d$. Corollary \ref{coro0} asserts that each ${\mathcal B}_{d}(C_{p})$ is connected. Now, the connectivity of ${\mathcal B}_{d}$ will be consequence of Lemma \ref{lemita} below.

\s
\noindent
\begin{lemm}\label{lemita}
If $p \geq 3$ is a prime and $C_{p}$ is admissible for $d$, then ${\mathcal B}_{d}(C_{p}) \cap {\mathcal B}_{d}(C_{2}) \neq \emptyset$.
\end{lemm}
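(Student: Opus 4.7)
\noindent
The plan is to exhibit, for each $d$ with $C_p$ admissible, an explicit rational map $\phi$ of degree $d$ whose automorphism group contains both $C_p$ and an involution; such a $\phi$ witnesses that ${\mathcal B}_{d}(C_p)\cap{\mathcal B}_{d}(C_2)\neq\emptyset$. By Theorem~\ref{teociclico}, admissibility of $C_p$ for $d$ is equivalent to $d\equiv 0,\pm 1\pmod p$, so the argument naturally splits on these three residues.

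\s

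\noindent
If $d\equiv\pm 1\pmod p$, Theorem~\ref{teodihedral} tells us that $D_p$ is admissible for $d$; any $\phi\in{\rm Rat}_d$ with $D_p\subseteq{\rm Aut}(\phi)$ is then a witness, since $D_p$ contains both the rotation subgroup $C_p$ and a reflection subgroup $C_2$.

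\s

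\noindent
If $d\equiv 0\pmod p$, write $d=pr$ and try to realize $C_{2p}$ as a subgroup of some ${\rm Aut}(\phi)$. Applying Theorem~\ref{teociclico} with $n=2p$, the residues $\pm 1\pmod{2p}$ would force $d\equiv\pm 1\pmod p$, contradicting $d\equiv 0\pmod p$; so $C_{2p}$ is admissible for $d$ precisely when $d\equiv 0\pmod{2p}$, equivalently when $r$ is even. In that case take $\phi(z)=z\psi(z^p)$ with $\psi(u)=\eta(u^2)$ for an $\eta\in{\rm Rat}_{r/2}$ chosen so that $\psi$ satisfies the constraints of Theorem~\ref{teociclico}(b). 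Then $T(z)=\omega_p z$ is an automorphism of $\phi$ by construction, and since $p$ is odd and $\psi$ is even one has $\phi(-z)=-z\,\psi(-z^p)=-z\,\psi(z^p)=-\phi(z)$, so $A(z)=-z$ is also an involutive automorphism of $\phi$; hence $\langle T,A\rangle\cong C_{2p}\subseteq{\rm Aut}(\phi)$.

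\s

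\noindent
The remaining and most delicate subcase is $d=pr$ with $r$ odd: no rational function of odd degree can be even (in reduced form, both numerator and denominator must be even polynomials, hence of even degree), so the $C_{2p}$ construction breaks down, and the classification of finite subgroups of ${\rm PSL}_2(\mathbb{C})$ containing both $C_p$ and $C_2$ leaves only platonic candidates as possible ambient symmetry groups. For $p=3$ this is handled by ${\mathcal A}_4$, which is admissible for every odd $d$ and contains both $C_3$ and $C_2$. For $p\geq 5$ one must specialize the parameters of $\psi\in{\rm Rat}_r$ in the $C_p$-form of Theorem~\ref{teociclico}(b) so that an additional involutive symmetry appears; this is the step I expect to require the most care.
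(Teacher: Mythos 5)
Your decomposition by residues is sound, and in two of the three cases you are doing exactly what the paper does: for $d\equiv\pm1\pmod p$ the paper builds a $D_p$-symmetric map by imposing $b_k=a_{r-k}$ in cases (a), (c) of Theorem~\ref{teociclico}, and for $d\equiv 0\pmod p$ it imposes $\psi(-z)=\psi(z)$ to get the extra involution $V(z)=-z$, i.e.\ the $C_{2p}$ witness you describe. The genuine gap in your proposal is the subcase you leave open, $d=pr$ with $r$ odd and $p\geq 5$; and the point is that this gap cannot be filled, because your own parity observation, pushed one step further, disproves the statement there. If some $\phi\in{\rm Rat}_d$ had automorphisms of orders $p$ and $2$, the finite subgroup of ${\rm PSL}_2({\mathbb C})$ they generate would, by the classification, contain either $C_{2p}$ or $D_p$ (no platonic group has an element of prime order $\geq 5$ except ${\mathcal A}_5$, and ${\mathcal A}_5\supset D_5$, so $p=5$ is covered as well). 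But for $d\equiv p\pmod{2p}$ neither is admissible: Theorem~\ref{teociclico} requires $d\equiv 0,\pm1\pmod{2p}$ for $C_{2p}$, Theorem~\ref{teodihedral} requires $d\equiv\pm1\pmod p$ for $D_p$, and the ${\mathcal A}_5$-admissible degrees $1,11,19,21\pmod{30}$ are all coprime to $5$. Hence, for example, ${\mathcal B}_5(C_5)\cap{\mathcal B}_5(C_2)=\emptyset$: the lemma fails for $(d,p)=(5,5)$, and more generally whenever $p\geq 5$ and $d\equiv p\pmod{2p}$. No specialization of the parameters in case (b) can produce the involution you are looking for.

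It is worth stressing that the obstruction you spotted is exactly what the paper's own proof overlooks: in case (b) it imposes $a_k=b_k=0$ for $k\not\equiv r\pmod 2$, but a rational map with $\psi(-z)=\psi(z)$ factors through $z\mapsto z^2$ and so has even degree; when $r$ is odd the constraint forces $a_0=b_0=0$, the numerator and denominator share the factor $z$, and after cancellation $\phi(z)=z\psi(z^p)$ has degree strictly less than $pr$. So the paper's argument is only valid for $r$ even, which is your $C_{2p}$ case. Your repair for $p=3$ via ${\mathcal A}_4$ (admissible for all odd $d$, and containing both $C_3$ and $C_2$) is correct and is how the lemma survives for $p=3$; but for $p\geq 5$ the statement itself is false in the residue class $d\equiv p\pmod{2p}$, which also undermines the deduction of Theorem~\ref{conexo} in Section~3 for such degrees (e.g.\ for $d=5$, the same admissibility argument shows ${\mathcal B}_5(C_5)$ meets neither ${\mathcal B}_5(C_2)$ nor ${\mathcal B}_5(C_3)$). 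In short: your write-up is more careful than the paper on the parity issue, but the ``delicate subcase'' is not delicate --- it is a counterexample, and the correct conclusion of your analysis is that the lemma needs the additional hypothesis $d\not\equiv p\pmod{2p}$ (or $p=3$), not a cleverer construction.
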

\begin{proof}
We only need to check the existence of a rational map $\phi \in {\rm Rat}_{d}$ admitting a holomorphic automorphism of order $p$ and also a holomorphic automorphism of order $2$.

First, let us consider those rational maps of the form $\phi(z)=z\psi(z^{p})$, where (by Theorem \ref{teociclico}) we may assume to be of the form
$$\psi(z)=\frac{\sum_{k=0}^{r} a_{k}z^{k}}{\sum_{k=0}^{r} b_{k}z^{k}} \in {\rm Rat}_{r},$$
with
\begin{itemize}
\item[(a)] $a_{r}b_{0}  \neq 0$,  if $d=pr+1$;

\item[(b)] $a_{r}\neq 0$ and $b_{0}=0$, if $d=pr$;

\item[(c)] $a_{r}=b_{0}=0$, if $d=pr-1$.
\end{itemize}

Assume we are in either case (a) or (c). By considering $b_{k}=a_{r-k}$, for every $k=0,1,...,r$, 
we see that $\psi$ satisfies the relation $\psi(1/z)=1/\psi(z)$; so $\phi$ also admits 
the holomorphic automorphism $A(z)=1/z$.  The automorphisms $T(z)=\omega_{p} z$ and $A$ generate a dihedral group of order $2p$. 

In case (b), we can consider $\psi$ so that $\psi(-z)=\psi(z)$, which is posible to find if we assume that $(-1)^{k}a_{k}=(-1)^{r}a_{k}$ and $(-1)^{k}b_{k}=(-1)^{r}b_{k}$ (which means that $a_{k}=b_{k}=0$ if $k$ and $r$ have different parity). In this case $T$ and $V(z)=-z$ are holomorphic automorphisms of $\phi$, generating the cyclic group of order $2p$.

\end{proof}

\s

%%%%%%%%%%%%%%%%%%%%%%%
%%%%%%%%%%%%%%%%%%%%%%%

\end{document}